\newtheorem{theorem}{Theorem}[section]
\newtheorem{lemma}[theorem]{Lemma}
\theoremstyle{definition}
\theoremstyle{remark}
\DeclareMathOperator{\Cay}{Cay}
\newcommand{\ba}{\mathbf{a}}
\newcommand{\bb}{\mathbf{b}}
\newcommand{\be}{\mathbf{0}}
\newcommand{\GH}{\hat{G}}
\newcommand{\HH}{\hat{H}}
\newcommand{\gH}{\hat{g}}
\newcommand{\SH}{\hat{S}}
\newcommand{\ZZ}{\mathbb{Z}}
\newcommand{\V}{\mathrm{V}}
\newcommand{\D}{\mathrm{D}}
\newcommand{\K}{\mathrm{K}}
\newcommand{\OO}{\mathrm{O}}
\begin{document}

\title{Counterexamples to ``A Conjecture on Induced Subgraphs of Cayley Graphs''}

\author{Florian Lehner}
\author{Gabriel Verret}
\address{Florian Lehner, Institute of Discrete Mathematics, Graz University of Technology, Steyrergasse 30, 8010 Graz, Austria.}
\email{mail@florian-lehner.net}
\address{Gabriel Verret, Department of Mathematics, The University of Auckland, Private Bag 92019, Auckland 1142, New Zealand.}
\email{g.verret@auckland.ac.nz}



\begin{abstract}
Recently, Huang gave a very elegant proof of the Sensitivity Conjecture by proving that hypercube graphs have the following property: every induced subgraph on a set of more than half its vertices  has maximum degree at least $\sqrt{d}$, where $d$ is the valency of the hypercube.  This was generalised by Alon and Zheng who proved that every Cayley graph on an elementary abelian $2$-group has the same property. Very recently, Potechin and Tsang proved an analogous results for Cayley graphs on abelian groups. They also conjectured that all Cayley graphs have the analogous property. We disprove this conjecture by constructing various counterexamples, including an infinite family of Cayley graphs of unbounded valency which admit an induced subgraph of maximum valency $1$ on a set of more than half its vertices.
\end{abstract}

\maketitle

\section{Introduction}

All graphs and groups in this paper are finite. Recently, Huang~\cite{Huang} gave a very elegant proof of the Sensitivity Conjecture~\cite{Nisan}  by proving that hypercube graphs have the following property: every induced subgraph on a set of more than half its vertices  has maximum degree at least $\sqrt{d}$, where $d$ is the valency of the hypercube. This is best possible, as shown by Chung, F\"{u}redi, Graham and Seymour~\cite{Chung}. This was generalised by Alon and Zheng who proved that every Cayley graph on an elementary abelian $2$-group has the same property~\cite{Alon}. In their concluding remarks, they point out that this result cannot generalise directly to other groups, but that it would be interesting to investigate the possible analogs for Cayley graphs on other groups. 

Very recently, Potechin and Tsang proved such an analogous result for Cayley graphs on all abelian groups~\cite{Potechin}, by replacing the $\sqrt{d}$ bound by $\sqrt{d/2}$. (More precisely, they prove their result with the bound $\sqrt{x+x'/2}$, where $x$ is the number of involutions in the connection set of the Cayley graph, and $x'$ the number of non-involutions.) They also conjectured that their result should hold for all Cayley graphs~\cite[Conjecture~1]{Potechin} and asked whether even all vertex-transitive graphs might have this property. 

In this short note, we give three infinite families of vertex-transitive graph such that every graph in these families admits an induced  subgraph of maximum valency $1$ on a set of more than half its vertices. First is the well-known family of odd graphs. Note that this family has unbounded valency and so these graphs fail to have the required property in a very strong sense. On the other hand, they are not Cayley. The second family consists of some $3$-regular Cayley graphs on dihedral groups. The last family consists of an infinite family of graphs of unbounded valency which are Cayley on groups defined via iterated wreath products.  Both the latter families are thus counterexamples to~\cite[Conjecture~1]{Potechin}. (We also note that, in our first two families, the induced subgraph in question is a matching, that is, each vertex has valency $1$.)

\section{Odd graphs}
For $n\geq 1$, the  \emph{odd graph}  $\OO_n$ has vertex-set the $n$-subsets of a $(2n+1)$-set $\Omega$, with two vertices adjacent if and only if the corresponding subsets are disjoint. For example $\OO_1\cong \K_3$, while $\OO_2$ is isomorphic to the Petersen graph. Note that there is an obvious  action of $S_{2n+1}$ as a vertex-transitive group of automorphism of $\OO_n$, so these graphs are vertex-transitive. On the other hand, these graphs are not Cayley graphs for $n\geq 2$~\cite{Godsil}. It is easy to check that $\OO_n$ is $(n+1)$-regular. Let $\omega\in\Omega$ and let $U$ be the set of $n$-subsets of $\Omega$ that do not contain $\omega$. Note that 
$$\frac{|U|}{|\V(\OO_n)|}=\frac{\binom{2n}{n}}{\binom{2n+1}{n}}=\frac{n+1}{2n+1}>\frac{1}{2},$$
 but the induced subgraph on $U$ is $1$-regular. 

\section{$3$-valent Cayley graphs on dihedral groups} \label{sec:dihedral}
For a group $G$ and an inverse-closed and identity-free subset $S$ of $G$, the \emph{Cayley graph} $\Cay(G,S)$ is the graph with vertex-set $G$ and two vertices $g$ and $h$ adjacent if and only $g^{-1}h\in S$.  For $n\geq 1$, we denote by  $\D_{2n}$ the dihedral group $\langle a,b \mid a^n=b^2=(ab)^2=1\rangle$ of order $2n$. 

Let $\Gamma_{18}=\Cay(\D_{18},\{b,ab,a^3b\})$. There is a set $U_{18}$ of $10$ vertices of $\Gamma_{18}$ such that the induced subgraph on $U_{18}$ is $1$-regular,  as can be seen on the picture below, where the elements of $U_{18}$ are coloured gray.

\tikzstyle{white}=[circle,draw,fill=white, minimum size=9mm]
\tikzstyle{black}=[circle,draw,fill=gray, minimum size=9mm]

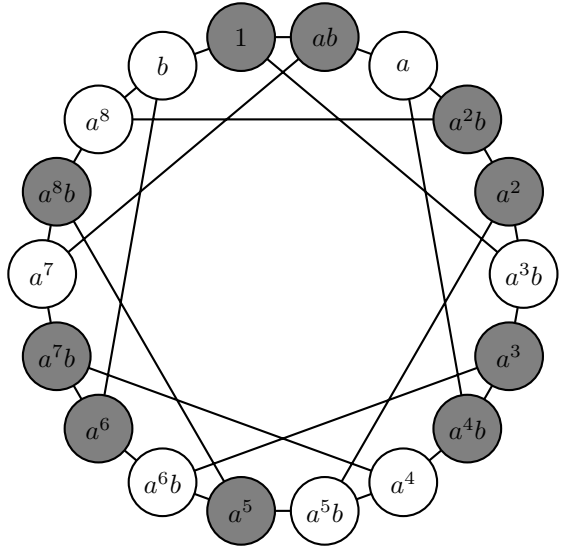
\begin{figure}[hhh]
\begin{tikzpicture}[thick,scale=0.8]
	\draw \foreach \x in {0,1,...,8}
	{ 
		(40*\x:4)   -- (40*\x+100:4) 
		(40*\x+20:4)   -- (40*\x+40:4) 
		(40*\x:4)  -- (40*\x+20:4) 
	};
	\node at (100:4)[black]  {$1$};
	\node at (140:4) [white]{$a^8$};
	\node at (180:4) [white]{$a^7$};
	\node at (220:4) [black]{$a^6$};
	\node at (260:4) [black]{$a^5$};
	\node at (300:4)[white] {$a^4$};
	\node at (340:4) [black]{$a^3$};
	\node at (20:4) [black]{$a^2$};
	\node at (60:4)[white] {$a$};
	
	\node at (120:4) [white]{$b$};
	\node at (160:4) [black]{$a^8b$};
	\node at (200:4) [black]{$a^7b$};
	\node at (240:4) [white] {$a^6b$};
	\node at (280:4) [white]{$a^5b$};
	\node at (320:4) [black]{$a^4b$};
	\node at (0:4)   [white]{$a^3b$};
	\node at (40:4) [black] {$a^2b$};
	\node at (80:4) [black] {$ab$};

\end{tikzpicture}
\caption{$\Gamma_{18}=\Cay(\D_{18},\{b,ab,a^3b\})$}
\end{figure}

Starting from $\Gamma_{18}$, it is easy to get an infinite family of further examples using covers: for $m\geq 1$, let $\Gamma_{18m}=\Cay(\D_{18m},\{b,ab,a^3b\})$ and let $N$ be the subgroup of $\D_{18m}$ generated by $a^9$. Note that $N$ is a normal subgroup of $\D_{18m}$ of order $m$ and $\D_{18m}/N\cong \D_{18}$. Let $\varphi:\D_{18m}\to \D_{18}$ be the natural projection and let $U_{18m}$ be the preimage of $U_{18}$. Now, $|U_{18m}|=10m$ and, since $\Gamma_{18m}$ is a (normal) cover of $\Gamma_{18}$, the induced subgraph on $U_{18m}$ is $1$-regular. 

\section{Cayley graphs on iterated wreath products}\label{sec:wreath}

Let $\ZZ_2=\{0,1\}$ denote the cyclic group of order $2$, let $G$ be a group with identity element $1_G$ and let $(\ZZ_2)^G$ denote the set of functions from $G$ to $\ZZ_2$. Note that $(\ZZ_2)^G$ forms a group under pointwise addition. Let $\be$ be the identity element of $(\ZZ_2)^G$ (that is, the function mapping every element of $G$ to $0$). Note also that there is natural action of $G$ on $(\ZZ_2)^G$. Written in exponential notation, we have that if $\ba\in (\ZZ_2)^G$ and $g \in G$, then $\ba^g$ is the element of $(\ZZ_2)^G$ defined by  $\ba^g(x)=\ba(g^{-1}x)$ for every $x\in G$. 

 The  \emph{wreath product} $\ZZ_2 \wr G$ is the group consisting of all pairs $(\ba,g)$ where $\ba\in (\ZZ_2)^G$ and $g \in G$, with the group operation given by 
\[(\ba,g) (\bb,h) = (\ba+\bb^g,g  h ).\]

For $g\in G$, let  $\ba_g\in (\ZZ_2)^G$ be the function mapping $g$ to $1$ and every other element of $G$ to $0$. If $S$ is a generating set for $G$, then
\[\{(\ba_1,1_G)\} \cup\{(\be, s) \mid s \in S\}\]
is a generating set for $\ZZ_2 \wr G$ which we call the \emph{canonical} generating set for $\ZZ_2 \wr G$ (with respect to $S$).

\begin{lemma}\label{lemma:wreath}
Let $S$ be a generating set for a group $G$, let  $\GH=\ZZ_2 \wr G$ and let $\SH$ be the canonical generating set for $\GH$ with respect to  $S$.
If $\Cay(G,S)$ is bipartite and has an induced subgraph of maximum degree $1$ on more than half its vertices, then the same is true for $\Cay(\GH, \SH)$.
\end{lemma}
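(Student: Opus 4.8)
The plan is to first describe the adjacency structure of $\Cay(\GH,\SH)$ explicitly, then reduce the maximum-degree-$1$ problem to a condition on the base graph, and finally build the required set from the given set $U\subseteq G$. First I would compute $(\ba,g)^{-1}=(\ba^{g^{-1}},g^{-1})$, whence $(\ba,g)^{-1}(\bb,h)=\bigl((\ba+\bb)^{g^{-1}},\,g^{-1}h\bigr)$, and this lies in $\SH$ in exactly two ways: either $h=g$ and $\bb=\ba+\ba_g$ -- a \emph{vertical} edge that toggles the coordinate of $\ba$ at the current position $g$ -- or $h=gs$ for some $s\in S$ and $\bb=\ba$ -- a \emph{horizontal} edge. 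Thus the vertical edges form a perfect matching $\mathcal V$ on $\GH$, while for each fixed $\ba$ the horizontal edges form a copy (``layer'') of $\Cay(G,S)$; so $\Cay(\GH,\SH)$ consists of $2^{|G|}$ disjoint copies of $\Cay(G,S)$ laced together by $\mathcal V$. I would also record here that $\GH$ is bipartite: if $c\colon G\to\ZZ_2$ is the homomorphism with $c(S)=\{1\}$ witnessing bipartiteness of $\Cay(G,S)$, then $\hat c(\ba,g)=|\ba|+c(g)$ (with $|\ba|$ the number of $1$'s of $\ba$) is a homomorphism $\GH\to\ZZ_2$ sending $\SH$ to $1$, which settles the bipartite half of the conclusion.

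Next I would reduce to a base-graph condition. For $\hat U\subseteq\GH$ and each $\ba$, set $U_\ba=\{g:(\ba,g)\in\hat U\}$. Reading degrees off the two edge types, $\hat U$ has maximum degree at most $1$ if and only if (i) every layer set $U_\ba$ is a dissociation set of $\Cay(G,S)$ (maximum degree at most $1$), and (ii) whenever $g$ has a horizontal neighbour inside $U_\ba$, its vertical partner is excluded, i.e. $g\notin U_{\ba+\ba_g}$. So the task becomes choosing dissociation sets $U_\ba$ of average size exceeding $|G|/2$ subject to the matching compatibility (ii). Note that breaking \emph{all} vertical edges forces each coordinate to be used at most once and caps the total at exactly $|G|/2$ per layer; the surplus over half must therefore come from \emph{keeping} vertical edges precisely at vertices that are isolated within their own layer.

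For the construction I would feed in $U$ through its induced matching $M$ and its isolated part $I$, together with the bipartition $X=c^{-1}(0)$, $Y=c^{-1}(1)$. The key local fact is that the subgraph of $\GH$ induced on the two columns over a single base edge $\{g,g'\}$ depends only on the two coordinates $\ba(g),\ba(g')$ and is a disjoint union of $8$-cycles; since an $8$-cycle has dissociation number $5>4$, such a pair of columns carries a maximum-degree-$1$ set of relative size $5/8>1/2$. Orienting each edge of $M$ from $X$ to $Y$, I would run this $5/8$ pattern on every matched pair of columns, keep each isolated column of $I$ full (relative size $1$, using only its vertical edges), and give each remaining column half of its vertices by breaking its verticals through a parity condition; the gain over $|G|/2$ is then supplied by the $5/8$ on matched pairs and the full isolated columns.

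\textbf{The main obstacle} is that a column is joined to $|S|$ others, so these blocks are not independent: activating a column in some layer can create horizontal edges to neighbouring columns and push a vertex to degree $2$. Controlling this is exactly where bipartiteness is used. With the $2$-colouring $c$ and the global lamp-parity $|\ba|$ at hand, one wants to arrange that adjacent columns are active on disjoint sets of layers \emph{except} along the chosen edges of $M$, so that the only horizontal edges surviving in $\hat U$ are the intended matching edges while every other potential horizontal edge is suppressed. The technical heart of the proof is to verify this global compatibility -- that (i) and (ii) hold simultaneously -- and to check that the adjustments forced on the neighbours of the matched and isolated columns do not consume the surplus, so that the final count $|\hat U|>2^{|G|-1}|G|$ is preserved.
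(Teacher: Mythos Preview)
Your description of the adjacency structure (horizontal layers that are copies of $\Cay(G,S)$, plus the vertical perfect matching $\mathcal V$) and your bipartiteness argument via $\hat c(\ba,g)=|\ba|+c(g)$ are correct and match the paper. The genuine gap is in your construction of $\hat U$: the column-based scheme you sketch cannot be completed as stated.

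Concretely, ``keep each isolated column of $I$ full'' is incompatible with ``give each remaining column half its vertices by a parity condition''. If $g\in I$ is full and a remaining neighbour $g'=gs$ carries the $\hat c$-parity half, then for every layer $\ba$ with $\hat c(\ba,g')=0$ the vertex $(\ba,g)$ already has vertical degree $1$ and now gains the horizontal neighbour $(\ba,g')$, so its degree is at least $2$. Forcing all neighbours of $I$ to be empty instead kills the surplus once $|S|\ge 2$: a full column contributes $2\cdot 2^{|G|-1}$ but empties up to $|S|$ neighbouring columns, and $2-|S|\le 0$. The $5/8$ pattern on matched pairs has the same defect: in an $8$-cycle any $5$-set of maximum degree $1$ necessarily contains degree-$1$ vertices on \emph{both} sides of the bipartition (a short case check shows that restricting all degree-$1$ vertices to one colour class caps the size at $4$), and those on the ``wrong'' side pick up extra horizontal edges to the parity-half remaining columns. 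You flag this as ``the main obstacle'' but do not resolve it; I do not see how to without abandoning the column viewpoint.

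The paper sidesteps all of this by working per \emph{layer} rather than per column, and by concentrating the entire surplus in a single layer. Take $H_{\be}=H$, take $H_{\ba}$ to be one fixed bipartition class $G_{\text{even}}$ for every other $\ba$ except the $|G|/2$ special layers $\ba_g$ with $g\in G_{\text{even}}$, where one uses $G_{\text{odd}}$ instead. Every layer $\ba\neq\be$ is then an independent set, so horizontal edges occur only in the $\be$ layer, where they are controlled by $H$; and the only purpose of the special layers is to guarantee that the vertical neighbour $(\ba_h,h)$ of each $(\be,h)$ is excluded. This gives $|H_\ba|=|G|/2$ for all $\ba\neq\be$ and $|H_{\be}|=|H|>|G|/2$, so $|\hat H|>|\GH|/2$, and the degree check is a two-line case analysis. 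The idea you are missing is that the surplus does not need to be distributed; one good layer suffices.
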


\begin{proof}
We first show that $\Cay(\GH, \SH)$ is bipartite. Call an element of $G$ \emph{even} if it lies in the same part of the bipartition of $\Cay(G,S)$ as $1_G$, and \emph{odd} otherwise. Call an element $\ba$ of  $(\ZZ_2)^G$ \emph{even} if $\ba$ maps an even number of elements of $G$ to $1$, and call $\ba$ \emph{odd} otherwise. Finally, call an element $\gH = (\ba, g) \in \GH$ \emph{even} if  $\ba$ and $g$  are either both even or both odd, and call $\gH$ \emph{odd} otherwise. It is straightforward to check that if $\hat s \in \SH$, then $\gH$ is even if and only if $\gH \hat s$ is odd. Thus the partition of $\GH$ into even and odd elements is a bipartition of $\Cay(\GH, \SH)$. 

Let $H \subseteq G$ be such that $|H| > |G|/2$ and the subgraph of $\Cay(G,S)$ induced by $H$ has maximum degree $1$. Denote by $G_{\text{even}}$ and $G_{\text{odd}}$ the set of even and odd elements of $G$, respectively. For each $\ba\in(\ZZ_2)^G$, let $[\ba] = \{(\ba, g)\mid g \in G\}\subseteq \GH$ and define a subset $H_{\ba}$ of $G$ as follows:
\[
    H_{\ba} = 
    \begin{cases}
        H & \text{if } \ba = \be,\\
        G_{\text{odd}}& \text{if } \ba=\ba_g \text{ for some $g\in G_{\text{even}}$},\\
        G_{\text{even}}& \text{otherwise.}\\
    \end{cases}
\]

Let $\HH= \{(\ba, h) \mid \ba\in(\ZZ_2)^G, h \in H_{\ba}\}\subseteq \GH$. Clearly, $|\HH \cap [\be]| = |H| > |G|/2$ while, for $\ba \neq \be$, we have  $|\HH \cap [\ba]| = |G|/2$. It follows that  $|\HH| > |\GH| /2$. Let $(\ba, h)\in \HH$. We show that $(\ba, h)$ has at most one neighbour in $\HH$. By definition, $h\in H_{\ba}$. 

If $\ba=\be$, then $h\in H$. Note that, if $g\in G_{\text{even}}$, then $H_{\ba_g}=G_{\text{odd}}$, whereas if $g\in G_{\text{odd}}$, then $H_{\ba_g}=G_{\text{even}}$. In particular, for every $g\in G$, $(\ba_g,g)\notin \HH$. It follows that $(\be,h)(\ba_1,1_G)=(\ba_1^h,h)=(\ba_h,h)\notin \HH$. On the other hand, for $s\in S$, $(\be, h)(\be,s)=(\be,hs)\in\HH$ if and only if $hs\in H$. This shows that  the number of neighbours of $(\be, h)$ in $\HH$ is the same as the number of neighbours of $h$ in $H$ and therefore at most $1$ (with equality reached for some $h\in H$). 

Assume now $\ba \neq \be$. Since the partition of $\GH$ into even and odd elements is a bipartition of $\Cay(\GH, \SH)$, it follows from the definition of $H_\ba$ that there are no two adjacent elements in $\HH \cap [\ba]$. Since $(\ba, h)$ has exactly one neighbour outside of $[\ba]$ (namely $(\ba, h)(\ba_1,1_G)$), it follows that $(\ba, h)$ has at most one neighbour in $\HH$. This concludes the proof that the subgraph induced by $\HH$ has maximum degree $1$.
\end{proof}

By starting with, say $(G,S)=(\ZZ_2,\{1\})$ and applying Lemma~\ref{lemma:wreath} repeatedly, one obtains an infinite family of Cayley graphs of unbounded valency such that every graph in the family  admits an induced  subgraph of maximum degree $1$ on a set of more than half its vertices, as claimed.

\section{Concluding remarks}

\begin{enumerate}
\item Recall that a \emph{covering map} $f$ from a graph $\hat\Gamma$ to a graph $\Gamma$ is a surjective map that is a local isomorphism. If such a map exists, then $\hat\Gamma$ is a  \emph{covering graph}  of  $\Gamma$. It is easy to see that if $\Gamma$ is a $d$-regular graph admitting an induced  subgraph of maximum degree $1$ on a set of more than half its vertices, then $\hat\Gamma$ has the same property. (It is well known that all the vertex-fibers have the same cardinality, so one can simply take the preimage of the set of more than half the vertices of $\Gamma$.) Starting from one example, one can thus construct infinitely many having the same valency, vertex-transitivity, Cayleyness, etc.

\item As a consequence of the above remark, we can construct infinite families of $d$-regular Cayley graphs of order $n$ admitting induced subgraphs of maximum degree $1$ on $\frac{1+\varepsilon(d)}2 n$ vertices.

\item It is an easy exercise that if   $\Gamma$ is a $d$-regular graph of order $n$ admitting a subset $X$ of vertices such that the induced graph on $X$ has maximum valency at most $1$, then $|X|/n\leq \frac{d}{2d-1}$.  Note that the Odd graph $\OO_{d-1}$ attains this bound and so is extremal from this perspective. It would be interesting to know if this bound can be achieved by Cayley graphs.

\item In light of the results from \cite{Alon,Potechin} on Cayley graphs of abelian groups, it seems natural ask if there is a natural family of nonabelian groups having the same property. In Sections~\ref{sec:dihedral} and~\ref{sec:wreath}, we give examples of Cayley graphs on some dihedral groups and some $2$-groups that do not have this property. Given that both these families of groups are in some sense close to being abelian (dihedral groups have a cyclic subgroup of index $2$, while $2$-groups are nilpotent), the question of determining whether any natural family of nonabelian groups has this property seems even more interesting.


\end{enumerate}

\section*{Acknowledgements}
Florian Lehner acknowledges the support of the Austrian Science Fund (FWF), grant J 3850-N32 and grant P 31889-N35. Gabriel Verret is grateful to the N.Z. Marsden Fund for its support  (via grant UOA1824).

\end{document}